\makeatletter \@namedef{subjclassname@2010}{
  \textup{2010} Mathematics Subject Classification}
\newtheorem{thm}{Theorem}[section]
\newtheorem{cor}[thm]{Corollary}
\newtheorem{lem}[thm]{Lemma}
\newtheorem{pro}[thm]{Proposition}
\theoremstyle{remark}
\newtheorem*{rema}{Remark}
\newtheorem*{remas}{Remarks}
\theoremstyle{definition}
\newtheorem*{defn}{Definition}
\newtheorem{exas}[thm]{\textbf{Examples}}
\newcommand{\ran}{\text{\rm{ran}}}
\newcommand{\R}{\mathbb{R}}
\newcommand{\N}{\mathbb{N}}
\newcommand{\C}{\mathbb{C}}
\begin{document}

\title{On the Operator Equations $A^n=A^*A$}
\author[S. DEHIMI et al.]{Souheyb Dehimi, Mohammed Hichem Mortad$^*$ and Zsigmond Tarcsay}

\thanks{* Corresponding author.}
\date{}
\keywords{Bounded and unbounded operators. Self-adjoint operators.
Quasinormal operators. Closed operators. Spectrum.}

\dedicatory{}

\subjclass[2010]{Primary 47A62. Secondary 47B20, 47B25}

\address{(The first author): University of Mohamed El Bachir El Ibrahimi, Bordj Bou Arreridj.
Algeria.}

\email{sohayb20091@gmail.com}

\address{(The corresponding author) Department of
Mathematics, University of Oran 1, Ahmed Ben Bella, B.P. 1524, El
Menouar, Oran 31000, Algeria.}

\email{mhmortad@gmail.com, mortad.hichem@univ-oran1.dz.}

\address{(The third author) Department of Applied Analysis, E\"{o}tv\"{o}s L. University,
P\`{a}zm\`{a}ny Péter Sét\`{a}ny 1/c., Budapest H-1117,
Hungary.}

\email{tarcsay@cs.elte.hu}

\begin{abstract}
Let $n\in\N$ and let $A$ be a closed linear operator (everywhere
bounded or unbounded). In this paper, we study (among others)
equations of the type $A^*A=A^n$ where $n\geq2$ and see when they
yield $A=A^*$ (or a weaker class of operators). In case $n\geq3$, we
have in fact a new class of operators which could placed right after
orthogonal projections and just before normal operators.
\end{abstract}

\maketitle

\section{Introduction}

It was asked in \cite{Laberteux-A*A=A2} whether $A^*A=A^2$ entails
the self-adjointness of $A\in B(H)$? This was first answered
affirmatively in \cite{Wang-Zhang} on finite dimensional vector
spaces. Then, the authors in \cite{McCullough-Rodman-A*A=A2} (who
were probably not aware of \cite{Wang-Zhang}) too obtained positive
results in both the finite and the infinite dimensional settings. It
is noteworthy that the infinite dimensional case was only alluded
superficially in \cite{Wang-Zhang} where the authors of that paper
were informed by the referee of the possibility of obtaining the
self-adjointness of $A$ by using the so-called "technique of
sequences of local inverses". In this paper, we carry on this
interesting investigation to deal with the unbounded case and we
reprove some known results using simpler arguments. Some
consequences are also given. Then, we treat the more general
equations of the type
\[A^*A=A^n,~n\in\N,~n\geq3.\]

Finally, we assume readers are familiar with notions and results in
operator theory. Some general references are \cite{FUR.book},
\cite{Mortad-Oper-TH-BOOK-WSPC}, \cite{RUD} and
\cite{SCHMUDG-book-2012}.

\section{\textbf{The equations $A^*A=A^n$ with $A\in B(H)$}:}

\begin{defn}
Let $A\in B(H)$. If $A^*A=A^n$ for some $n\in\N$ such that $n\geq3$,
then $A$ is called a \textbf{generalized projection}.
\end{defn}

\begin{remas}\hfill
\begin{enumerate}
  \item First, we note that for a general $n\in\N$ (with $n\geq3$), then
$A^*A=A^n$ does not always gives the self-adjointness of $A\in B(H)$
even when $\dim H<\infty$ as we shall shortly see. Second, we notice
that if $A$ is any orthogonal projection, then it does satisfy
$A^*A=A^n$ for any $n\geq2$.
  \item In general, there are unitary operators which do not satisfy such equations even when $\dim H<\infty$. We need to find a unitary $A\in B(H)$ such
that $A^*A\neq A^n$ for any $n$. Consider on a finite dimensional
space $H$, the following:
\[A=e^{ie\pi}I\]
where $e$ is the usual transcendental number. Then $A^*A=I$ whilst
\[A^n=e^{ine\pi}I\neq I=A^*A,~\forall n\in\N,~n\geq 2.\]
\end{enumerate}
\end{remas}

The first major result of the paper is a complete characterization
of this apparently new class of operators.

\begin{thm}\label{MAIn THM}
Let $H$ be a  complex Hilbert space and let $A\in B(H)$ be a bounded
operator and let $n\in\N, n\geq2$. Then $A$ is a solution of the
equality
\begin{equation}\label{E:main}
A^n=A^*A
\end{equation}
if and only if
\begin{itemize}
    \item $A=A^*$ (if $n=2$),
    \item there is a family $P_1,\ldots,P_n\in B(H)$ of orthogonal projections such that $P_jP_k=0, (j\neq k)$ such that
    \begin{equation}\label{PPPPP}
        A=\sum_{k=1}^n e^{\frac{2k\pi i}{n}} P_k
    \end{equation}
    (if $n\geq 3$). In this case, we also have $\|A\|=1$ (when $A\neq0$).
\end{itemize}
\end{thm}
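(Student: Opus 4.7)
The plan is to reduce to the case where $A$ is normal and then read off the decomposition from the spectral theorem; the reverse direction is a one-line calculation using the pairwise orthogonality of the $P_k$, so I would focus on the forward implication.

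The first step is to show that $A$ is normal. From $A^*A=A^n$ one reads off that $A^n$ is self-adjoint and positive, and in particular $A^n=(A^*)^n$. The identity $A\cdot A^*A=A^{n+1}=A^*A\cdot A$ shows that $A$ commutes with $A^*A$, so $A$ is quasinormal, hence hyponormal. Since $A^n$ is self-adjoint (in particular normal), the well-known fact that a hyponormal operator whose $n$-th power is normal must itself be normal then yields normality of $A$.

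With $A$ normal in hand, the spectral theorem $A=\int\lambda\,dE(\lambda)$ converts the equation $A^n=A^*A$ into the pointwise identity $\lambda^n=|\lambda|^2$ on $\sigma(A)$. Writing $\lambda=re^{i\theta}$: for $n=2$ this forces $e^{2i\theta}=1$, so $\lambda\in\R$ and hence $A=A^*$. For $n\geq 3$, every non-zero solution satisfies $r^{n-2}=1$ and $e^{in\theta}=1$, so $r=1$ and $\theta=2k\pi/n$. Thus $\sigma(A)\subseteq\{0\}\cup\{e^{2k\pi i/n}:k=1,\dots,n\}$. Because this spectrum is finite, the spectral measure is atomic, and setting $P_k=E(\{e^{2k\pi i/n}\})$ produces the decomposition~(\ref{PPPPP}) with mutually orthogonal projections. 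The norm identity $\|A\|=1$ (when $A\neq 0$) is then immediate from $\|A\|=\max\{|\lambda|:\lambda\in\sigma(A)\}$, since at least one $P_k$ is non-zero.

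The delicate point I anticipate is the passage from quasinormal to normal. If one prefers not to cite the above black-box theorem, one can argue directly from the polar decomposition $A=U|A|$, where $U$ and $|A|$ commute by quasinormality: the equation rewrites as $U^n|A|^n=|A|^2$, and taking adjoints (using that $U^*$ also commutes with $|A|$) gives $U^n=(U^*)^n$ on $\overline{\ran|A|}=(\ker A)^\perp$. For $n\geq 3$, combining this with the fact that $U$ is an isometry on $(\ker A)^\perp$ forces $|A|$ to be the orthogonal projection onto $(\ker A)^\perp$ and the restriction of $U$ to be a unitary satisfying $U^n=I$; the spectral theorem applied to that unitary then recovers~(\ref{PPPPP}) directly.
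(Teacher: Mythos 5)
Your proof is correct, and the endgame (normality plus the spectral theorem, with the pointwise identity $\lambda^n=|\lambda|^2$ on $\sigma(A)$) coincides with the paper's; the genuine difference is how normality is obtained. The paper proves the stronger identity $A^*=A^{n-1}$ by an elementary range--kernel argument: the two operators agree on $\overline{\ran A}$ directly from $A^*A=A^n$, and on $\ker A^*$ both vanish because $\ker A^*=(\ran A)^{\perp}=(\ran A^n)^{\perp}=(\ran A^*A)^{\perp}=\ker A\subseteq\ker A^{n-1}$ (using $\overline{\ran A^n}=\overline{\ran A}$); normality is then immediate since $A$ commutes with $A^{n-1}=A^*$. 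You instead go through quasinormality and hyponormality and invoke Stampfli's theorem that a hyponormal operator with a normal power is normal --- a correct but much heavier external input, and in fact essentially the strategy the paper reserves for the unbounded case, where the bounded range--kernel argument is unavailable. The paper's route buys self-containedness and the clean by-product $A^*=A^{n-1}$; yours buys uniformity with the unbounded setting, and your polar-decomposition variant gives an appealing structural picture ($A$ restricted to $(\ker A)^{\perp}$ is a unitary $n$-th root of the identity, extended by zero) without spectral integrals. If you develop that variant, do record that $(\ker A)^{\perp}$ reduces $U$ (it is invariant under both $U$ and $U^*$ because each commutes with $|A|$), and that passing from $U^n|A|^n=|A|^2$ to ``$|A|$ is the projection onto $(\ker A)^{\perp}$'' uses the joint functional calculus of the commuting pair $(U^n,|A|)$; these are details to fill in, not gaps.
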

\begin{proof} The ''if'' part of the statement is clear. We  show that the ``only if'' part is also true.
First we are going to prove that
\begin{equation}\label{E:1}
    A^{n-1}=A^*.
\end{equation}
It is clear from the hypothesis that $A^*=A^{n-1}$ on the range of
$A$, hence $A^*=A^{n-1}$ also on $\overline{\ran A}$ because of
continuity. It suffices therefore to prove that $A^{n-1}=0$ on $\ker
A^{*}$, i.e., $\ker A^*\subseteq \ker A^{n-1}$. First we claim that
\begin{equation}\label{E:ran A3}
    \overline{\ran A^n}=\overline{\ran A}.
\end{equation}
Indeed, our assumption clearly implies that $\ran A^{n+1}=\ran
AA^*A=A(\ran A^*A)$, hence by equality $\overline{\ran A^*A}=\ker
A^{\perp}$ we conclude that
\begin{equation*}
    \overline{\ran A^{n+1}}=\overline{A(\ker A^{\perp})}=\overline{\ran A},
\end{equation*}
that clearly gives \eqref{E:ran A3}. From this we conclude that
\begin{equation*}
    \ker A^*=(\ran A)^{\perp}=(\ran A^n)^\perp=(\ran A^*A)^\perp= \ker A\subseteq \ker A^{n-1},
\end{equation*}
which proves \eqref{E:1}.

If $n=2$ then \eqref{E:1} expresses just that $A$ is self-adjoint.
(Observe that up to this point we did not used that $H$ is complex).
Suppose now that $n\geq 3$, then from \eqref{E:1} it follows that
$A$ is normal and that the function
\begin{equation*}
    \varphi(z)=z^{n-1}-\bar z,~z\in\C
\end{equation*}
vanishes on $\sigma(A)$. In particular, if $\lambda\in\sigma(A)$
then either $\lambda=0$ or $\lambda$ is a solution of $\lambda^n=1$,
whence we conclude that
\begin{equation*}
    \sigma(A)\subseteq \{0\}\cup\{e^{\frac{2k\pi  i}{n}},~k=1,\ldots,n\}.
\end{equation*}
Let us denote by $E$ the spectral measure of $A$ and set $P_k:=
E(\{e^{\frac{2k\pi  i}{n}}\})$ then it follows from the spectral
theorem that $P_kP_j=0$ $(k\neq j)$ and that
\begin{equation*}
    A=\sum_{k=1}^n e^{\frac{2k\pi  i}{n}} P_k.
\end{equation*}

To show the last claim, just apply the spectral radius theorem to
the normal operator $A$ to obtain $\|A\|=1$ when $A\neq0$. This
marks the end of the proof.
\end{proof}

\begin{rema}As alluded to above, any orthogonal projection satisfies the equations
$A^*A=A^n$ with $n\geq3$ ($n=2$ is also allowed). This new class of
operators lies therefore just between orthogonal projections and
normal operators.
\end{rema}

\begin{cor}\label{mpmp}
Let $A\in B(H)$ be satisfying $AA^*=A^2$. Then $A$ is self-adjoint.
\end{cor}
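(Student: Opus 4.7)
The plan is to reduce Corollary \ref{mpmp} to the $n=2$ case of Theorem \ref{MAIn THM} by passing to the adjoint. The theorem says that any $B\in B(H)$ satisfying $B^*B=B^2$ must be self-adjoint, so if I can show that $B:=A^*$ verifies this identity, I will conclude $A^*=(A^*)^*=A$.

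First I would compute $B^*B=A A^*$, which by hypothesis equals $A^2$. Next I would take adjoints of the hypothesis $AA^*=A^2$: the left-hand side $AA^*$ is already self-adjoint, while the right-hand side becomes $(A^*)^2$, yielding $AA^*=(A^*)^2=B^2$. Combining the two computations gives $B^*B=B^2$.

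With this identity in hand, Theorem \ref{MAIn THM} applied to $B$ with $n=2$ immediately delivers $B=B^*$, i.e., $A^*=A$, which is the desired self-adjointness.

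There is essentially no obstacle beyond spotting the right substitution. The only thing to notice is that taking adjoints of the hypothesis $AA^*=A^2$ produces the companion identity $AA^*=(A^*)^2$, after which the reduction to the already-proved case $n=2$ of Theorem \ref{MAIn THM} is automatic.
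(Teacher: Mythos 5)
Your proof is correct and is precisely the reduction the paper intends (the corollary is stated without proof as a consequence of Theorem \ref{MAIn THM}): applying the theorem with $n=2$ to $B=A^*$, after observing that taking adjoints of $AA^*=A^2$ yields $AA^*=(A^*)^2$, gives $A^*=A$.
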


\begin{rema}From Theorem \ref{MAIn THM}, it turns out that operators
satisfying $A^2=A^*A$ are just the self-adjoint ones. However, a
solution of $A^n=A^*A$, $n\geq3$ need not be self-adjoint, as it can
be seen immediately from the general form (2) of those operators.
\end{rema}

As an immediate consequence, we have:

\begin{pro}
If $B,C\in B(H)$ are such that $C^*C=BC$ and $B^*B=CB$, then
$B=C^*$.
\end{pro}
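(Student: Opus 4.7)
The plan is to reduce the statement to Theorem~\ref{MAIn THM} in the case $n=2$ by means of a standard $2\times 2$ block matrix trick on $H\oplus H$. I would set
\[
A := \begin{pmatrix} 0 & B \\ C & 0 \end{pmatrix} \in B(H\oplus H),
\]
so that $A^* = \begin{pmatrix} 0 & C^* \\ B^* & 0 \end{pmatrix}$, and compute by direct block multiplication
\[
A^*A = \begin{pmatrix} C^*C & 0 \\ 0 & B^*B \end{pmatrix}, \qquad A^2 = \begin{pmatrix} BC & 0 \\ 0 & CB \end{pmatrix}.
\]
The two hypotheses $C^*C=BC$ and $B^*B=CB$ thus merge into the single identity $A^*A=A^2$.

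Next, I would invoke Theorem~\ref{MAIn THM} with $n=2$, which asserts that any bounded Hilbert space operator satisfying $T^*T=T^2$ is self-adjoint. Applied to the block operator $A$ constructed above, this forces $A=A^*$. Comparing off-diagonal entries of this operator identity immediately yields $B=C^*$ (and the redundant companion $C=B^*$), which is exactly the desired conclusion.

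There is essentially no obstacle here: once the correct $2\times 2$ encoding is spotted, the proposition becomes a one-line corollary of the previously established theorem. The only cosmetic choice is between the block matrix above and the alternative $\begin{pmatrix} 0 & C \\ B & 0 \end{pmatrix}$; the latter yields $AA^*=A^2$ instead and would invoke Corollary~\ref{mpmp} rather than Theorem~\ref{MAIn THM}, but leads to the same conclusion $B=C^*$.
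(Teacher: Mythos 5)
Your proof is correct and is essentially identical to the paper's own argument: the same block operator $A=\left(\begin{smallmatrix} 0 & B \\ C & 0 \end{smallmatrix}\right)$ on $H\oplus H$, the same computation reducing the hypotheses to $A^*A=A^2$, and the same appeal to the $n=2$ case of Theorem~\ref{MAIn THM}. Only your closing aside is slightly off --- the swapped block matrix $\left(\begin{smallmatrix} 0 & C \\ B & 0 \end{smallmatrix}\right)$ still yields $A^*A=A^2$ (with the diagonal entries permuted), not $AA^*=A^2$ --- but this does not affect the proof.
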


\begin{proof}Let $A\in B(H\oplus H)$ be defined as $A=\left(
                                                        \begin{array}{cc}
                                                          0 & B \\
                                                          C & 0 \\
                                                        \end{array}
                                                      \right)
$. Then
\[A^*A=\left(
                                                        \begin{array}{cc}
                                                          C^*C & 0 \\
                                                          0 & B^*B \\
                                                        \end{array}
                                                      \right)\text{ and } A^2=\left(
                                                        \begin{array}{cc}
                                                          BC & 0 \\
                                                          0 & CB \\
                                                        \end{array}
                                                      \right).\]
By hypothesis, we ought to have $A^*A=A^2$, whereby $A$ becomes
self-adjoint, in which case, $B=C^*$, as wished.
\end{proof}

\begin{cor}Let $B,C\in B(H)$ be self-adjoint and such that $C^2=BC$ and $B^2=CB$. Then
$B=C$.
\end{cor}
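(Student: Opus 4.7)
The plan is to apply the preceding proposition directly, using self-adjointness of $B$ and $C$ to reinterpret the given hypotheses.

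First I would observe that since $B=B^*$ and $C=C^*$, we have $C^*C=C^2$ and $B^*B=B^2$. Therefore the assumptions $C^2=BC$ and $B^2=CB$ can be rewritten as
\[
 C^*C=BC\quad\text{and}\quad B^*B=CB,
\]
which are exactly the hypotheses of the preceding proposition. Hence the proposition yields $B=C^*$, and combining this with $C=C^*$ gives $B=C$.

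There is no real obstacle: the entire content is the observation that self-adjointness converts the operator equations $C^2=BC$, $B^2=CB$ into the form $C^*C=BC$, $B^*B=CB$ handled by the proposition. One could also give a self-contained argument by forming $A=\left(\begin{smallmatrix} 0 & B \\ C & 0\end{smallmatrix}\right)$ as in the proof of the preceding proposition and applying Corollary~\ref{mpmp} (since $AA^*=A^2$ follows from the assumed identities when $B,C$ are self-adjoint), but this would merely duplicate work already done.
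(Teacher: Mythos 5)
Your proof is correct and matches the paper's intent exactly: the corollary is stated without proof precisely because, as you observe, self-adjointness turns the hypotheses into those of the preceding proposition, which yields $B=C^*=C$. Nothing further is needed.
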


Another consequence is the following:

\begin{pro}
Let $A\in B(H)$ be such that $A^*A^2=A^*AA^*$. Then $A$ is
self-adjoint.
\end{pro}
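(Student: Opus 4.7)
The plan is to reduce the hypothesis to the form $AA^* = A^2$ and then invoke Corollary \ref{mpmp}. The first move is to rewrite $A^*A^2 = A^*AA^*$ as
\[
A^*A(A-A^*)=0.
\]
Setting $T:=A-A^*$, I then have $A^*AT=0$. The key observation is that $A^*AT=0$ forces $AT=0$: indeed, for every $x\in H$,
\[
0=\langle A^*ATx,Tx\rangle = \|ATx\|^2,
\]
so $ATx=0$ for all $x$. Thus $A(A-A^*)=0$, i.e., $A^2=AA^*$.

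At this point Corollary \ref{mpmp} (which says that $AA^*=A^2$ already forces self-adjointness of $A$) applies directly and gives $A=A^*$. No further work is needed, because the corollary itself is the hard step and is already in hand.

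The main obstacle is really only the passage from $A^*AT=0$ to $AT=0$; once one notices that pairing against $Tx$ produces $\|ATx\|^2$, the rest is immediate. The rest is bookkeeping: recognizing that $A^*A^2=A^*AA^*$ factors as $A^*A(A-A^*)=0$, and recognizing that the reduced equation $A^2=AA^*$ is exactly the hypothesis of the previously established corollary.
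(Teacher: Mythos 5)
Your proof is correct and follows essentially the same route as the paper: factor the hypothesis as $A^*A(A-A^*)=0$, deduce $A(A-A^*)=0$ (the paper cites $\ker(A^*A)=\ker A$ where you verify it directly via $\langle A^*ATx,Tx\rangle=\|ATx\|^2$), and then apply Corollary \ref{mpmp}. No issues.
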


\begin{proof}We may write
\[A^*A^2=A^*AA^*\Longrightarrow A^*A(A-A^*)=0.\]
Hence for any $x\in H$, we clearly have that $(A-A^*)x\in\ker
(A^*A)$. But it is well known that $\ker (A^*A)$ coincides with
$\ker A$. Therefore, $(A-A^*)x\in\ker A$ or simply
\[A^2=AA^*.\]
A glance at Corollary \ref{mpmp} finally gives the self-adjointness
of $A$, marking the end of the proof.
\end{proof}

The method of matrices of operators allows us to establish the
following result:

\begin{pro}
Let $A\in B(H)$ be satisfying
\[A^*A={A^*}^2A^2=A^3.\]
Then there exist three orthogonal projections, $P_0$, $P_1$, $P_2\in
B(H)$ which are pairwise orthogonal such that
\begin{equation}\label{EQ}
A=P_0+e^{\frac{2\pi i}{3}}P_1+e^{\frac{4\pi i}{3}}P_2.
\end{equation}
\end{pro}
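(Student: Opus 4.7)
The plan is to apply the block-matrix trick hinted at in the remark preceding the statement, reducing the hypothesis to the $n=2$ case of Theorem \ref{MAIn THM} (equivalently, Corollary \ref{mpmp}), and then read off a spectral decomposition. The key observation is that the pair of equalities $A^*A=(A^*)^2A^2=A^3$ combines into a single $2\times 2$ operator-matrix identity of the form $B^*B=B^2$.

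Concretely, I would work on $H\oplus H$ with the asymmetric block
\[B=\begin{pmatrix} 0 & A \\ A^2 & 0\end{pmatrix}.\]
A direct block multiplication gives
\[B^2=\begin{pmatrix} A^3 & 0 \\ 0 & A^3\end{pmatrix},\qquad B^*B=\begin{pmatrix} (A^*)^2A^2 & 0 \\ 0 & A^*A\end{pmatrix},\]
so the two assumed equalities are exactly the single condition $B^*B=B^2$. Corollary \ref{mpmp} then forces $B=B^*$, and comparing the off-diagonal blocks yields $A^2=A^*$.

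From $A^2=A^*$, normality of $A$ comes for free: $AA^*=A\cdot A^2=A^3=A^*A$. Arguing exactly as in the second half of the proof of Theorem \ref{MAIn THM}, the function $\varphi(z)=z^2-\bar z$ vanishes on $\sigma(A)$; any such $\lambda$ satisfies $|\lambda|^4=|\lambda|^2$, so $|\lambda|\in\{0,1\}$, and when $|\lambda|=1$ the relation $\bar\lambda=\lambda^{-1}$ forces $\lambda^3=1$. Hence $\sigma(A)\subseteq\{0,1,e^{2\pi i/3},e^{4\pi i/3}\}$, and with $E$ the spectral measure of $A$, the pairwise orthogonal projections $P_0=E(\{1\})$, $P_1=E(\{e^{2\pi i/3}\})$, $P_2=E(\{e^{4\pi i/3}\})$ deliver \eqref{EQ} via the spectral theorem (any $E(\{0\})$-component contributes zero to the sum). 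The only genuine obstacle is guessing the right block matrix: the asymmetry between the $(1,2)$-entry $A$ and the $(2,1)$-entry $A^2$ is precisely what makes both hypotheses correspond to the single identity $B^*B=B^2$.
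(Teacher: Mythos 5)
Your proof is correct and follows essentially the same route as the paper: the identical block operator $B=\left(\begin{smallmatrix} 0 & A \\ A^2 & 0\end{smallmatrix}\right)$ on $H\oplus H$, the reduction of both hypotheses to $B^*B=B^2$, self-adjointness of $B$ giving $A^2=A^*$ (the paper states the equivalent adjoint form $A=(A^*)^2$), and the same spectral-measure conclusion. The only cosmetic differences are your citation of Corollary \ref{mpmp} in place of the $n=2$ case of Theorem \ref{MAIn THM} (the two are equivalent under $A\mapsto A^*$) and your choice of $\varphi(z)=z^2-\bar z$ rather than $z-\bar z^2$.
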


\begin{proof}Let $A\in B(H)$ and define $B\in B(H\oplus H)$ by:
\[B=\left(
      \begin{array}{cc}
        0 & A \\
        A^2 & 0 \\
      \end{array}
    \right).
\]
Then $B^2=\left(
            \begin{array}{cc}
              A^3 & 0 \\
              0 & A^3 \\
            \end{array}
          \right)$. Since $B^*B=\left(
                                  \begin{array}{cc}
                                    {A^*}^2A^2 & 0 \\
                                    0 & A^*A \\
                                  \end{array}
                                \right)$, by hypothesis we must therefore
                                have $B^*B=B^2$. Hence, $B$ is
                                self-adjoint by
                                Theorem \ref{MAIn THM}. This just means that
                                $A={A^*}^2$. Consequently, $A$ is
                                obviously normal and
                                \[\varphi(z)=z-\overline{z}^2,~ z\in\C\]
vanishes on $\sigma(A)$. From that it is readily seen that if
$\lambda\in\sigma(A)$ then either $\lambda=0$ or $\lambda$ is a
solution of $\lambda^3=1$. Whence, we conclude that
\[\sigma(A)\subseteq \{0\}\cup\{e^{\frac{2k\pi  i}{3}},~k=0,1,2\}.\]

From the spectral theorem it follows that $A$ can be written as
(\ref{EQ}) for some orthogonal projections $P_0$, $P_1$, $P_2$ with
pairwise orthogonal ranges. The proof is complete.
\end{proof}

We can also treat the "skew-adjointness" case. First, we give a
result which might already be known to some readers and so it is
preferable to include a proof. Recall that a bounded hyponormal
operator having a real spectrum is self-adjoint (see e.g.
\cite{Stampfli hyponormal 1965}).

\begin{lem}\label{hyponormal purely imag sp is skew-adj LEM}
Let $A\in B(H)$ be hyponormal and having purely imaginary spectrum.
Then, $A$ is skew-adjoint (that is, $A^*=-A$).
\end{lem}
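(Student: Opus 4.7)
The plan is to reduce the lemma to the cited Stampfli result (hyponormal $+$ real spectrum $\Longrightarrow$ self-adjoint) by an obvious rotation trick: multiply by $i$. Concretely, I would set $B:=iA$ and show (a) $B$ is hyponormal and (b) $\sigma(B)\subseteq\R$, after which Stampfli gives $B^*=B$, and rewriting this in terms of $A$ yields $A^*=-A$.

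The verification of (a) is a one-line algebraic check: since $(iA)^*=-iA^*$, one has
\[
B^*B=(-iA^*)(iA)=A^*A\quad\text{and}\quad BB^*=(iA)(-iA^*)=AA^*,
\]
so the inequality $A^*A\geq AA^*$ (hyponormality of $A$) transfers verbatim to $B^*B\geq BB^*$. For (b), I use the spectral mapping rule for scalar multiples: $\sigma(iA)=i\cdot\sigma(A)$, and because $\sigma(A)\subseteq i\R$ by hypothesis, multiplication by $i$ sends it into $i\cdot i\R=\R$.

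With (a) and (b) in hand, Stampfli's theorem applies to $B$ and gives $B=B^*$, i.e.\ $iA=(iA)^*=-iA^*$, from which $A^*=-A$ follows immediately. There is no real obstacle here; the only thing worth double-checking is the direction of the spectral mapping and that hyponormality is preserved under multiplication by a unimodular scalar, both of which are essentially trivial. The proof is therefore a short three-line reduction to the known fact about hyponormals with real spectrum.
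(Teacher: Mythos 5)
Your proposal is correct and follows exactly the paper's own argument: set $B=iA$, note that $B$ remains hyponormal and has real spectrum, apply Stampfli's theorem to get $B=B^*$, and unwind to obtain $A^*=-A$. The only difference is that you spell out the (trivial) verifications that the paper leaves implicit.
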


\begin{proof}Set $B=iA$ and so $B$ too is hyponormal. Hence
$\sigma(B)\subset \R$ for by assumption $\sigma(A)\subset i\R$.
Hence $B$ is self-adjoint, i.e.
\[-iA^*=B^*=B=iA,\]
i.e. $A$ is clearly skew-adjoint.
\end{proof}

Mutatis mutandis, the following result is then easily obtained:

\begin{pro}\label{A*A=-A2 BOUNDED S.A. PRO}
Let $A\in B(H)$ be satisfying $A^*A=-A^2$. Then $A$ is skew-adjoint.
\end{pro}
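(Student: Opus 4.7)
The plan is to apply the same $i$-rotation trick used in Lemma \ref{hyponormal purely imag sp is skew-adj LEM}, but now at the level of the operator equation itself rather than at the level of its spectrum. Concretely, I would introduce the auxiliary operator $B:=iA\in B(H)$; the aim is to show that the hypothesis $A^*A=-A^2$ becomes the already-treated equation $B^*B=B^2$, after which Theorem \ref{MAIn THM} (with $n=2$) will finish the job.

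The relevant computations are short. Since $B^*=-iA^*$, one has $B^*B=(-iA^*)(iA)=A^*A$ and $B^2=(iA)^2=-A^2$. The hypothesis $A^*A=-A^2$ therefore translates into $B^*B=B^2$, and Theorem \ref{MAIn THM} yields $B^*=B$. Unpacking this identity reads $-iA^*=iA$, i.e.\ $A^*=-A$, which is precisely skew-adjointness.

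There is essentially no obstacle here: the substitution $B=iA$ does all of the work, and complex scalars are built into the bounded-operator framework of the section, so $iA$ is a legitimate element of $B(H)$. One could alternatively retrace the opening portion of the proof of Theorem \ref{MAIn THM} to derive $\ker A^*\subseteq\ker A$ directly from $A^*A=-A^2$ and then deduce $A^*=-A$ on $\overline{\ran A}$, but the $i$-rotation argument is much cleaner and is exactly the \emph{mutatis mutandis} reformulation that the preceding lemma invites.
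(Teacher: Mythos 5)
Your argument is correct, but it is not the route the paper intends for this proposition. The paper's ``mutatis mutandis'' points back to Lemma \ref{hyponormal purely imag sp is skew-adj LEM}: one first derives quasinormality, hence hyponormality, of $A$ from $A^*A=-A^2$ (via $AA^*A=-A^3=(-A^2)A=A^*AA$), then observes that $\sigma(A^2)=\sigma(-A^*A)\subseteq(-\infty,0]$, so the spectral mapping theorem forces $\lambda^2\leq 0$ and hence $\lambda\in i\R$ for every $\lambda\in\sigma(A)$, and finally invokes the lemma on hyponormal operators with purely imaginary spectrum. Your substitution $B=iA$ bypasses all of this by transporting the problem directly to the already-settled equation $B^*B=B^2$: the computations $B^*B=(-iA^*)(iA)=A^*A$ and $B^2=-A^2$ are exactly right, Theorem \ref{MAIn THM} with $n=2$ gives $B=B^*$, and $-iA^*=iA$ unpacks to $A^*=-A$. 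In fact the paper itself adopts precisely your argument for the unbounded analogue in the next section and remarks there that it ``could have been used above anyway,'' so your proof is not only valid but arguably the cleaner of the two: it needs no hyponormality, no spectral mapping, and no appeal to the lemma, only the $n=2$ case of Theorem \ref{MAIn THM}. What the paper's spectral route buys in exchange is a template that transfers to the closed unbounded setting, where one cannot simply quote the bounded characterization theorem.
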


The following sharp result is also of interest.

\begin{pro}\label{opop}
If $A\in B(H)$ is such that $A\neq 0$ and $A^*A=qA^2$ where
$q\in\R^*$, then either $q=1$ or $q=-1$.
\end{pro}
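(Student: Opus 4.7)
The plan is to convert the operator equation into a polynomial identity of the form $(1-q^2)A^3=0$ and then separately rule out $A^3=0$, forcing $q^2=1$.

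The first move is to exploit that $A^*A$ is self-adjoint together with the fact that $q\in\R^*$. Taking adjoints of $A^*A=qA^2$ gives $A^*A = q(A^*)^2$, and cancelling the nonzero real scalar $q$ yields $A^2=(A^*)^2$. In particular, $A^2$ is itself self-adjoint; this extra symmetry is what will make the nonvanishing step below succeed.

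The second move is a short computation: left-multiply $A^*A=qA^2$ by $A^*$ to get $(A^*)^2A = qA^*A^2$. Substituting $(A^*)^2=A^2$ on the left and $A^*A=qA^2$ on the right collapses this to $A^3 = q(qA^2)A = q^2A^3$, i.e.\ $(1-q^2)A^3=0$. If we can exclude $A^3=0$, the conclusion $q\in\{-1,1\}$ is immediate.

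The anticipated main obstacle is precisely this nonvanishing of $A^3$. My plan here is: from $A\neq 0$, the hypothesis forces $A^2\neq 0$ (else $A^*A=qA^2=0$, so $A=0$). Because $A^2$ is self-adjoint by the first step, the C$^*$-identity gives $\|A^4\| = \|(A^2)^2\| = \|A^2\|^2 > 0$, so $A^4\neq 0$. Since $A^4=A\cdot A^3$, we get $A^3\neq 0$, and combined with $(1-q^2)A^3=0$ we conclude $q^2=1$, i.e.\ $q=\pm 1$.
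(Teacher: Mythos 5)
Your proof is correct, and it takes a genuinely different route from the paper's. The paper first upgrades the hypothesis to a structural statement about $A$: for $q>0$ it establishes self-adjointness and for $q<0$ skew-adjointness (by the same quasinormality/hyponormality and spectral arguments used for the cases $q=\pm1$ earlier in the paper), and then reads off $(1\mp q)A^2=0$. You instead stay entirely algebraic: taking adjoints and using $q\in\R$ gives $A^2=(A^*)^2$ (so $A^2$ is self-adjoint), left-multiplying by $A^*$ and substituting twice gives $(1-q^2)A^3=0$, and the $C^*$-identity applied to the self-adjoint operator $A^2$ yields $\|A^4\|=\|A^2\|^2>0$, hence $A^3\neq0$ and $q^2=1$. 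Each step checks out: $A\neq0$ forces $A^*A\neq0$, hence $A^2\neq 0$, hence $A^4\neq 0$, hence $A^3\neq0$. Your argument is more elementary and self-contained --- it needs no spectral theory, no hyponormality, and no appeal to the earlier propositions --- while the paper's approach delivers the extra structural conclusion that $A$ is in fact self-adjoint or skew-adjoint (not merely that $q=\pm1$), which is in the spirit of the rest of the section. For the statement as literally posed, your shorter route fully suffices.
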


\begin{proof}By considering the cases $q<0$ and $q>0$ separately, we may as
above establish the skew-adjointness of $A$ and self-adjointness of
$A$ respectively. Now, in case $A$ is skew-adjoint (when $q<0$), we
may write
\[A^*A=qA^2\Longrightarrow -A^2=qA^2\Longrightarrow (1+q)A^2=0\]
which gives $q=-1$. In the event of the self-adjointness of $A$, we
may just reason similarly to get $q=1$, and this finishes the proof.
\end{proof}

\section{\textbf{The equations $A^*A=A^n$ with a closed and densely defined operator $A$:}}

First, we stop by some examples.

\begin{exas}\hfill
\begin{enumerate}
  \item If $A$ is a linear operator, then $A^*A=A^2$ does not necessarily give
  $A=A^*$.   The most trivial example is to consider a densely defined and unclosed operator $A$ (hence such $A$ cannot be self-adjoint) such that
\[D(A^2)=D(A^*A)=\{0\}\]
as in \cite{Mortad-triviality domains powers adjoints}, say. Then
$A^*A=A^2$ is trivially satisfied.

  \item \textit{$A^*A\subset A^2 \not\Rightarrow A=A^*$ even when $A$ is closed}: Indeed, consider any closed, densely defined and symmetric operator $A$ \textit{which is not self-adjoint}. Then $A\subset A^*$ and so
   $A^*A\subset A^2$.
  \item \textit{$A^2\subset A^*A \not\Rightarrow A=A^*$ even when $A$ is closed}: In this case, consider any closed, densely defined and symmetric operator $A^*$ \textit{which is not self-adjoint}. A similar observation as just above then yields $A^2\subset A^*A$. We may even consider a
closed, symmetric and semi-bounded such that $D(A^2)=\{0\}$ (see
\cite{CH}, cf. \cite{Dehimi-Mortad-CHERNOFF}). Then trivially
$A^2\subset A^*A$ and $A$ is not self-adjoint.
\end{enumerate}
\end{exas}

Now, we deal with the equation $A^*A=A^2$ for a closed and densely
defined $A$.

\begin{thm}\label{A*A=A2 S.A. UNBOUNDED THM}
Let $H$ be a complex Hilbert space and let $A$ be a closed and
densely defined (unbounded) operator verifying $A^*A=A^2$. Then $A$
is self-adjoint on its domain $D(A)\subset H$.
\end{thm}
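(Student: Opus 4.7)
The plan is to mirror the $n=2$ case of Theorem~\ref{MAIn THM} while handling domain issues carefully.

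First, since $A$ is closed and densely defined, von Neumann's theorem gives that $A^*A$ is self-adjoint, so the hypothesis $A^*A = A^2$ makes $A^2$ self-adjoint and non-negative; in particular $I + A^2 = I + A^*A$ is a bijection $D(A^2) \to H$. The kernel inclusion $\ker A^* \subseteq \ker A$ follows exactly as in the bounded case: from $\ran A^2 = \ran A^*A \subseteq \ran A$ I get
\[
\ker A^* = (\ran A)^\perp \subseteq (\ran A^2)^\perp = \ker(A^*A) = \ker A.
\]

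Next I would note that for any $x \in D(A^2) = D(A^*A)$, the vector $Ax$ lies in $D(A) \cap D(A^*)$ and satisfies $A(Ax) = A^2 x = A^*(Ax)$, so $A$ and $A^*$ coincide on $A(D(A^2))$. Because $D(A^*A)$ is a core for $A$, this subspace is dense in $\overline{\ran A}$ in the $H$-norm.

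The crux is to upgrade this coincidence to symmetry $A \subseteq A^*$. For $y \in D(A)$ I would decompose $y = y_0 + y_1$ with $y_0 \in \ker A^* \subseteq \ker A \subseteq D(A)$ (where both $A$ and $A^*$ vanish) and $y_1 \in \overline{\ran A} \cap D(A)$. It then remains to show $y_1 \in D(A^*)$ with $A^* y_1 = A y_1$. The plan is to approximate $y_1$ in the graph norm of $A$ by a sequence in $A(D(A^2))$, where $A$ and $A^*$ already agree, and then apply the closedness of $A^*$. Producing such a graph-norm approximating sequence — equivalently, verifying that $A(D(A^2))$ is a core for the restriction of $A$ to $\overline{\ran A}$ — is the main obstacle. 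The natural tool is the bounded self-adjoint resolvent $(I+A^2)^{-1}$, whose range is precisely $D(A^2)$, and which should allow one to regularize $y_1$ suitably.

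Once symmetry is in hand the proof closes quickly. Positivity of $A^*A$ gives $\ker(A \pm i) = \{0\}$, since $(A \pm i)x = 0$ forces $A^2 x = -x$ and hence $\|Ax\|^2 = \langle A^*Ax, x\rangle = -\|x\|^2$. The identity $(A - i)(A + i) = (A + i)(A - i) = I + A^2$ on $D(A^2)$, together with surjectivity of $I + A^2$, yields $\ran(A \pm i) = H$. Symmetry and $\ran(A \pm i) = H$ then give $A = A^*$ by the classical von Neumann criterion.
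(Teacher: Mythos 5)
Your proposal is incomplete at precisely the step that carries all the difficulty. The outer frame is sound: the kernel inclusion $\ker A^*\subseteq \ker A$ is correct, the observation that $A$ and $A^*$ agree on $A(D(A^*A))$ is correct, and the closing argument (symmetry plus $\ran(A\pm i)=H$, obtained from surjectivity of $I+A^*A$ and von Neumann's criterion) would indeed finish the proof. But the passage from ``$A$ and $A^*$ coincide on $A(D(A^2))$'' to the symmetry $A\subseteq A^*$ is exactly the point you yourself label ``the main obstacle,'' and you do not resolve it. What you need is not density of $A(D(A^2))$ in $\overline{\ran A}$ in the $H$-norm (which the core property of $D(A^*A)$ does give), but density of $A(D(A^2))$ in $D(A)\cap\overline{\ran A}$ in the \emph{graph norm} of $A$. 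The suggested regularization by $(I+A^2)^{-1}$ does not obviously produce this: even for $y_1=Au$ with $u\in D(A)$, approximating $u$ by $u_n\in D(A^*A)$ with $Au_n\to Au$ says nothing about convergence of $A^2u_n$; and to make $(I+A^*A)^{-1}$ useful one needs $A$ to commute suitably with functions of $A^*A$, which is essentially the quasinormality of $A$ --- a fact you never establish. As written, the argument is a plan whose central step is missing.

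The paper's proof avoids this issue entirely and is genuinely different in route. It first derives quasinormality from $AA^*A=A^{3}=A^{2}A=A^*AA$, invokes the known implication that quasinormal operators are hyponormal, then shows $\sigma(A)\subset\R$ by a spectral mapping theorem (if $\lambda\in\sigma(A)$ then $\lambda^{2}\in\sigma(A^*A)\subseteq[0,\infty)$, forcing $\lambda$ real), and concludes from the authors' earlier result that a closed hyponormal operator with real spectrum is self-adjoint. If you wish to salvage your approach, the one-line quasinormality computation is the missing ingredient: it is what allows the spectral calculus of $A^*A$ (hence the resolvent $(I+A^*A)^{-1}$) to intertwine with $A$, and is the natural way to make your regularization step rigorous.
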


\begin{proof}Plainly,
\[A^*A=A^2\Longrightarrow AA^*A=A^{3}\Longrightarrow AA^*A=A^2A\Longrightarrow AA^*A=A^*AA,\]
showing the quasinormality of $A$ (as defined in \cite{Jablonski et
al 2014}, say). By consulting \cite{Janas-HYPO-I} and \cite{Madjak},
we know that quasinormal operators are hyponormal. That is, $A$ is
hyponormal.

According to the proof of Theorem 8 in \cite{Dehimi-Mortad-BKMS},
closed hyponormal operators having a real spectrum are automatically
self-adjoint. Once that's known and in order that $A$ be
self-adjoint, it suffices therefore to show the realness of its
spectrum given that $A$ is already closed.

So, let $\lambda\in\sigma(A)$. Since $A$ is closed, we have by
invoking a spectral mapping theorem (e.g. Theorem 2.15 in
\cite{Kulkrani et al-2008}) that $\lambda^2\geq0$ for $A^*A$ is
self-adjoint and positive. Now,  this forces $\lambda$ to be real.
Accordingly, $\sigma(A)\subset \R$, as needed.
\end{proof}

\begin{rema}
Notice that the previous proof may well be applied to the first
claim of Theorem \ref{MAIn THM} when $H$ is a Hilbert space over
$\C$.
\end{rema}

As in the bounded case, we have:

\begin{pro}
Let $B,C$ be two densely defined and closed operators obeying
$C^*C=BC$ and $B^*B=CB$. Then $B=C^*$.
\end{pro}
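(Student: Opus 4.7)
The natural plan is to mimic the bounded-case matrix trick from earlier, being careful about domains throughout. Define the operator
\[
A = \begin{pmatrix} 0 & B \\ C & 0 \end{pmatrix} \quad \text{on } H\oplus H, \qquad D(A) = D(C)\oplus D(B),
\]
and aim to apply Theorem \ref{A*A=A2 S.A. UNBOUNDED THM} to $A$, concluding that $A$ is self-adjoint, which will force $B=C^*$.

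First I would check the two structural hypotheses needed to invoke Theorem \ref{A*A=A2 S.A. UNBOUNDED THM}: $A$ is densely defined (immediate from density of $D(B)$ and $D(C)$) and closed. Closedness follows from a routine componentwise limit argument: if $(x_n,y_n)\to(x,y)$ in $D(A)$ and $A(x_n,y_n)=(By_n,Cx_n)\to(u,v)$, then closedness of $B$ and $C$ separately give $y\in D(B)$, $x\in D(C)$, $By=u$, $Cx=v$.

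Next I would compute the adjoint and the two relevant products, tracking domains exactly. A direct use of the defining identity $\langle A(x,y),(u,v)\rangle=\langle(x,y),A^*(u,v)\rangle$ separates into the $B$-part and $C$-part and yields
\[
A^* = \begin{pmatrix} 0 & C^* \\ B^* & 0 \end{pmatrix}, \qquad D(A^*) = D(B^*)\oplus D(C^*).
\]
From this,
\[
D(A^*A) = D(C^*C)\oplus D(B^*B), \qquad D(A^2) = D(BC)\oplus D(CB),
\]
and on the respective domains the diagonal entries are $(C^*C,B^*B)$ and $(BC,CB)$. The operator equalities $C^*C=BC$ and $B^*B=CB$ (which include domain equality, by the convention used in the paper) therefore give $D(A^*A)=D(A^2)$ and $A^*Az=A^2z$ on that common domain, i.e.\ $A^*A=A^2$ as operators on $H\oplus H$.

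With this in hand, Theorem \ref{A*A=A2 S.A. UNBOUNDED THM} applies and yields $A=A^*$. Reading off the $(1,2)$-entry together with the domain matching $D(B)=D(C^*)$ gives $B=C^*$, finishing the argument. The only place requiring care is step four, where one must avoid conflating inclusions with equalities: the hypothesis must be used as operator equality in both domain and action, so that the diagonal blocks of $A^*A$ and $A^2$ truly coincide and the resulting $A^*A=A^2$ is an equality (not merely an inclusion) of unbounded operators, which is what Theorem \ref{A*A=A2 S.A. UNBOUNDED THM} requires.
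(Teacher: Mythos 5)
Your proof is correct and follows exactly the route the paper intends (the paper omits the proof, saying only ``As in the bounded case''): form $A=\left(\begin{smallmatrix}0 & B\\ C & 0\end{smallmatrix}\right)$, verify it is closed and densely defined, compute $A^*A$ and $A^2$ blockwise, and invoke Theorem \ref{A*A=A2 S.A. UNBOUNDED THM}. Your careful tracking of the domains of $A^*$, $A^*A$ and $A^2$ is precisely the point that needs checking in the unbounded setting, and you get it right.
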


By adopting a very similar idea to the bounded case (by observing
that Lemma \ref{hyponormal purely imag sp is skew-adj LEM} holds for
unbounded and closed operators as well), we may easily establish the
following result. We include, however, a somewhat different proof
which could have been used above anyway.

\begin{pro}
Let $A$ be a closed and densely defined (unbounded) operator such
that $A^*A=-A^2$. Then $A$ is skew-adjoint.
\end{pro}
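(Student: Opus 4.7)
The plan is to mimic the proof of Theorem \ref{A*A=A2 S.A. UNBOUNDED THM} step-by-step, compensating for the sign change in the hypothesis $A^{*}A=-A^{2}$. First I would derive quasinormality of $A$: multiplying the hypothesis on the left by $A$ gives $AA^{*}A=-A^{3}$, while multiplying it on the right and substituting the hypothesis again yields $A^{*}AA=(-A^{2})A=-A^{3}$, whence $AA^{*}A=A^{*}AA$. As in the earlier proof, quasinormal (closed, densely defined) operators are hyponormal (see \cite{Jablonski et al 2014}, \cite{Janas-HYPO-I}, \cite{Madjak}), so $A$ is closed and hyponormal.

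Next I would localise the spectrum of $A$ via the spectral mapping theorem for closed operators (e.g.\ Theorem~2.15 of \cite{Kulkrani et al-2008}). Any $\lambda\in\sigma(A)$ satisfies $\lambda^{2}\in\sigma(A^{2})=\sigma(-A^{*}A)\subset(-\infty,0]$, the inclusion being because $A^{*}A$ is positive and self-adjoint. Thus $\lambda^{2}\leq 0$, which forces $\lambda\in i\R$; in other words $\sigma(A)$ is purely imaginary.

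To finish I would pass to $B:=iA$. This operator is still closed and hyponormal (the defining inequality $\|B^{*}x\|\leq\|Bx\|$ is unaffected by the unimodular factor) and satisfies $\sigma(B)\subset\R$. Invoking once more the result from \cite{Dehimi-Mortad-BKMS} that a closed hyponormal operator with real spectrum is automatically self-adjoint, we conclude $B^{*}=B$, i.e.\ $-iA^{*}=iA$, which is exactly $A^{*}=-A$. The only subtlety here is the domain-level care required in the quasinormality computation, but it is handled exactly as in the proof of Theorem \ref{A*A=A2 S.A. UNBOUNDED THM}, so no genuinely new obstacle appears.
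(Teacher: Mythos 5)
Your argument is correct, but it is not the route the paper actually takes. The paper's proof is a two-line reduction: setting $B=iA$ transforms the hypothesis $A^*A=-A^2$ directly into $B^*B=B^2$ (since $B^*B=(-iA^*)(iA)=A^*A$ and $B^2=-A^2$), and then Theorem \ref{A*A=A2 S.A. UNBOUNDED THM} is invoked as a black box to get $B=B^*$, i.e. $A^*=-A$. You instead re-run the internal machinery of that theorem's proof on $A$ itself: quasinormality from $AA^*A=-A^3=A^*AA$, hence hyponormality, then the spectral mapping theorem to place $\sigma(A)$ in $i\R$, and only at the very end do you pass to $iA$ to apply the ``closed hyponormal with real spectrum is self-adjoint'' result of \cite{Dehimi-Mortad-BKMS}. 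In effect you prove the unbounded analogue of Lemma \ref{hyponormal purely imag sp is skew-adj LEM} along the way --- a route the paper explicitly acknowledges is available (``by observing that Lemma \ref{hyponormal purely imag sp is skew-adj LEM} holds for unbounded and closed operators as well'') before opting for the shorter substitution. What the paper's proof buys is brevity and the avoidance of any repeated spectral computation; what yours buys is transparency about \emph{why} the conclusion holds (the spectrum of $A$ is forced onto the imaginary axis) without relying on the theorem as an opaque intermediary. All individual steps in your version check out: the domain bookkeeping in the quasinormality computation is identical to that in the proof of Theorem \ref{A*A=A2 S.A. UNBOUNDED THM}, and hyponormality of $iA$ is indeed unaffected by the unimodular factor.
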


\begin{proof}
Set $B=iA$. Then
\[A^*A=-A^2\Longrightarrow B^*B=B^2.\]
Since $B$ is closed, Theorem \ref{A*A=A2 S.A. UNBOUNDED THM} applies
and gives the self-adjointness of $B$ or the skew-adjointness of
$A$, as required.
\end{proof}

An unbounded version of Proposition \ref{opop} is also available.

\begin{pro}
If $A$ is a closed, unbounded and densely defined operator such that
 $A^*A=qA^2$ where $q\in\R^*$, then either $q=1$ or
$q=-1$.
\end{pro}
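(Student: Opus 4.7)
The plan is to adapt the bounded case (Proposition \ref{opop}) to the closed unbounded setting, using Theorem \ref{A*A=A2 S.A. UNBOUNDED THM} as the main ingredient and splitting the argument according to the sign of $q$. The guiding observation is that $A^*A = qA^2$ forces $A^2 = q^{-1}A^*A$ to be either positive self-adjoint or negative self-adjoint, which pins down $\sigma(A)$ once hyponormality is in hand.

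For $q>0$, I would first verify that $A$ is quasinormal directly from the hypothesis via
\[
AA^*A = A(qA^2) = qA^3 = (qA^2)A = (A^*A)A,
\]
and then deduce hyponormality exactly as in the proof of Theorem \ref{A*A=A2 S.A. UNBOUNDED THM}. Since $A^2 = q^{-1}A^*A$ is positive self-adjoint, the spectral mapping step used in that same proof gives $\lambda^2 \geq 0$ for every $\lambda \in \sigma(A)$, hence $\sigma(A) \subset \R$. A closed hyponormal operator with real spectrum is self-adjoint (the implication invoked in Theorem \ref{A*A=A2 S.A. UNBOUNDED THM}), so $A^*A = A^2$. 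Inserting this back into the hypothesis yields $(1-q)A^2 = 0$, and since $A$ is nonzero and self-adjoint one has $A^2 \neq 0$ (from $\|Ax\|^2 = \langle A^2 x, x\rangle$ on the dense set $D(A^2)$). Therefore $q=1$.

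For $q<0$ a rescaling trick makes everything collapse to the previous case: setting $B = iA$, which is again closed and densely defined, one computes
\[
B^*B = A^*A = qA^2 = -qB^2,
\]
with $-q > 0$. The case already handled forces $-q = 1$, i.e.\ $q = -1$.

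The main obstacle is nothing new relative to the rest of the section: one needs the spectral mapping theorem for polynomials of closed operators and the implication ``closed hyponormal plus real spectrum implies self-adjoint.'' Both have already been applied in the proof of Theorem \ref{A*A=A2 S.A. UNBOUNDED THM}, so they transfer verbatim; the only additional bookkeeping is the positivity/negativity check on $A^2$, which is immediate from $q^{-1} > 0$ or $q^{-1} < 0$ respectively.
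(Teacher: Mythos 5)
Your proposal is correct and follows essentially the same route as the paper, which simply says the argument is "similar to the bounded case": for $q>0$ one establishes quasinormality, hence hyponormality, uses the positivity of $A^2=q^{-1}A^*A$ and the spectral mapping theorem to get $\sigma(A)\subset\R$ and thus self-adjointness, and then $(1-q)A^2=0$ with $A^2\neq 0$ (as $A$ is unbounded) forces $q=1$. Your reduction of the case $q<0$ to the case $q>0$ via $B=iA$ is exactly the substitution the paper itself uses in the preceding proposition on $A^*A=-A^2$, so it is a faithful (and slightly more streamlined) rendering of the intended argument.
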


\begin{proof}The proof is similar to the one in the bounded case.
For instance, when $q>0$, we obtain the self-adjointness of $A$.
Hence
\[A^*A=qA^2\Longrightarrow A^2=qA^2\Longrightarrow (1-q)A^2\subset 0\]
which forces $q=1$ (remember that $A^2$ is unbounded).
\end{proof}

Finally, we treat the unbounded case. Somehow expectedly, we show
the impossibility of the equations $A^*A=A^n$ (with $n\geq3$) for
unbounded closed operators.

\begin{thm}\label{A*A=An n geq 3UNBOUNDED}
Let $A$ be a closed and densely defined operator with a domain
$D(A)\subset H$ and let $n\in\N$ be such that $n\geq3$. If
$A^*A=A^n$, then $A\in B(H)$ (and so $A$ can be written in the form
(\ref{PPPPP})).
\end{thm}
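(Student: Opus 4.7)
The plan is to extract the polar decomposition $A=U|A|$ of $A$ and to prove that $\sigma(|A|)\subseteq\{0,1\}$. Once $|A|$ is bounded, the positive self-adjoint operator $A^*A=|A|^2$ is everywhere defined, so $D(A)\supseteq D(A^*A)=H$, and the closed graph theorem yields $A\in B(H)$; the structural description then follows at once from Theorem \ref{MAIn THM}.

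The first step is to establish that $A$ is quasinormal. Reading the hypothesis as the equality $D(A^*A)=D(A^n)$ together with pointwise coincidence, a short domain chase shows
\[
\mathrm{dom}(A\cdot A^*A) \;=\; D(A^{n+1}) \;=\; \mathrm{dom}(A^*A\cdot A),
\]
and that both compositions equal $A^{n+1}$ on this common domain. Hence $A(A^*A)=(A^*A)A$ as closed operators. By the characterisation of quasinormality for closed densely defined operators (see \cite{Jablonski et al 2014}), this is equivalent to saying that the partial isometry $U$ commutes with every spectral projection of $|A|$; in particular $A^k=U^k|A|^k$ and $D(A^k)=D(|A|^k)$ for every $k\geq 1$.

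Setting $k=n$ and invoking the hypothesis $A^*A=A^n$ yields the operator identity
\[
|A|^2 \;=\; U^n |A|^n \qquad \text{on } D(|A|^n)=D(|A|^2).
\]
For every $x$ in this common domain, $|A|^n x$ lies in $\overline{\ran |A|}$, which is the initial subspace of $U$; hence $U^n$ acts isometrically on $|A|^n x$, and so $\||A|^n x\|=\||A|^2 x\|$. The spectral theorem for the positive self-adjoint operator $|A|$ then gives
\[
\int_0^\infty \bigl(t^{2n}-t^4\bigr)\, d\mu_x(t) \;=\; 0,
\]
where $\mu_x$ is the $|A|$-spectral measure at $x$. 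Applying this to $x=E_{|A|}(I)y$ with $I$ a bounded interval contained either in $(0,1)$ or in $(1,\infty)$, where the integrand has definite sign, forces $E_{|A|}(I)=0$. Consequently $\sigma(|A|)\subseteq\{0,1\}$, and $|A|$ is a bounded orthogonal projection, which is what was needed.

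The principal obstacle is the first step: one has to verify, not merely on a dense subset but as equality of operators with matching domains, that $A(A^*A)=(A^*A)A$, so that the quasinormality criterion of \cite{Jablonski et al 2014} genuinely applies. Once quasinormality is secured, the polar decomposition together with the functional calculus for $|A|$ drives the remainder of the argument essentially mechanically.
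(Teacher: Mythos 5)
Your argument is correct, and after the opening step it follows a genuinely different route from the paper's. Both proofs begin identically, by deducing $AA^*A=A^*AA$ from $A^*A=A^n$ and invoking the characterization of quasinormality from \cite{Jablonski et al 2014} (your explicit domain chase here is a welcome bit of care that the paper leaves implicit). From there the paper stays with domains: quasinormality gives hyponormality, hence $D(A)\subseteq D(A^*)$ and a chain of domain equalities culminating in $D(A^2)=D(A^4)$; it then cites Stochel's result that powers of closed quasinormal operators are closed and Tarcsay's Corollary 2.2 (boundedness from $D(T)=D(T^2)$) to conclude $A^2\in B(H)$, whence $D(A)=H$ and the closed graph theorem applies. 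You instead exploit the polar decomposition $A=U|A|$: since $U$ commutes with the spectral measure of $|A|$ and maps $\overline{\ran |A|}$ isometrically into itself, the identity $|A|^2=U^n|A|^n$ yields $\||A|^2x\|=\||A|^nx\|$, and localizing the resulting integral identity $\int(t^{2n}-t^4)\,d\mu_x=0$ on compact intervals of $(0,1)$ and $(1,\infty)$ forces $\sigma(|A|)\subseteq\{0,1\}$. Your route is more self-contained (it needs neither the Stochel nor the Tarcsay citation), it makes visible exactly where $n\geq 3$ enters (for $n=2$ the integrand $t^{2n}-t^4$ vanishes identically and the argument correctly gives nothing), and it delivers strictly more than boundedness: $|A|=A^*A$ is an orthogonal projection, so $A$ is a partial isometry and $\|A\|\leq 1$, consistent with the representation \eqref{PPPPP}. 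The only phrasing I would tighten is the parenthetical ``as closed operators'' (closedness of the compositions is neither needed nor obviously relevant; what matters is equality of the two products with their domains), and you should note explicitly that $U$ preserves $\overline{\ran|A|}$ because $U$ commutes with the spectral projection $E_{|A|}((0,\infty))$ --- that is the one point where isometry of $U^n$ on $|A|^nx$ could otherwise fail.
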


\begin{proof}Let $A$ be a closed and densely defined operator which
obeys $A^*A=A^n$ where $n\geq3$. Then (as in the bounded case)
\[A^*A=A^n\Longrightarrow AA^*A=A^{n+1}\Longrightarrow AA^*A=A^nA\Longrightarrow AA^*A=A^*AA,\]
showing the quasinormality of $A$. It then follows that $A$ is
hyponormal and so $D(A)\subset D(A^*)$. Hence
\[
D\left( A^{2}\right) \subseteq D\left( A^{\ast }A\right)=D\left(
A^{n}\right)\]
or merely
\[D\left( A^{2}\right) =D\left( A^{n}\right).\]
Also
\[D\left( A^{3}\right) \subseteq D\left( A^{\ast }AA\right) =D\left(
A^{n+1}\right)\] so that
\[D\left( A^{2}\right)=D\left(A^{n+1}\right).\]
Now, since $A$ is closed, it follows that $A^2$ is closed as it is
already quasinormal (see e.g. Proposition 5.2 in \cite{Stochel-P(S)
closed IEOT-2002}). Also, the quasinormality of $A$ yields that of
$A^{2}$ (by Corollary 3.8 in \cite{Jablonski et al 2014}, say) and
so $A^2$ is hyponormal. Therefore,
\[
D\left(A^{2}\right) \subseteq D[\left(A^{2}\right)^*]\] and
\[
D\left(A^{2}\right) =D\left(A^{4}\right).\]

In the end, according to Corollary 2.2 in \cite{Tarcsay-2012-bounded
D(T)=D(T2)}, it follows that $A^{2}$ is everywhere bounded on $H$.
Hence $D(A)=H$ and so the Closed Graph Theorem intervenes now to
make $A\in B(H)$, as coveted.
\end{proof}

\end{document}